\newtheorem*{theorem}{Theorem}
\newtheorem*{conjecture}{Conjecture}
\newcommand{\Tr}{{\rm tr}\,}
\newcommand{\bR}{\mathbb{R}}
\newcommand{\bN}{\mathbb{N}}
\newcommand{\cD}{\mathcal{D}}
\begin{document}

\baselineskip 7mm

\title{On the $S$-matrix conjecture 
\footnote{The paper will appear in Linear Algebra and its Applications.}}

\author{Roman Drnov\v sek}

\date{\today}

\begin{abstract}
\baselineskip 6.5mm
Motivated with a problem in spectroscopy,  Sloane and Harwit conjectured in 1976 what is the minimal Frobenius norm of the inverse of a matrix
having all entries from the interval $[0, 1]$. In 1987, Cheng proved their conjecture in the case of odd dimensions, while for even dimensions 
he obtained a slightly weaker lower bound for the norm. His proof is based on the Kiefer-Wolfowitz equivalence theorem 
from the approximate theory of optimal design.  In this note we give a short and simple proof of his result.
\end {abstract}

\maketitle
\baselineskip 6.7mm
\noindent
{\it Key words}:  matrices, Frobenius norm, inequalities \\
{\it Math. Subj.  Classification (2010)}: 15A45, 15A60 \\

A {\it Hadamard matrix} is a square matrix with entries in $ \{-1, 1\}$ whose rows and hence columns are mutually orthogonal. 
In other words, a Hadamard matrix of order $n$ is a $\{-1, 1\}$-matrix $A$ satisfying $A A^T = n I$, i.e.,  
$\frac{1}{\sqrt{n}} A$ is a unitary matrix. 

An {\it S-matrix} of order $n$ is a $\{0, 1\}$-matrix formed by taking a Hadamard matrix of order $n+1$ in which the entries 
in the first row and column are $1$, changing $1$'s to $0$'s and $-1$'s to $1$'s, and deleting the first row and column. 

The {\it Frobenius norm} of a real matrix $A = [a_{i, j}]_{i, j =1}^n$ is defined as 
$$ \|A\|_F =  \left( \sum_{i=1}^n \sum_{j=1}^n a_{i, j}^2 \right)^{1/2} \ . $$
It is associated to the inner product defined by 
$$ \langle A, B \rangle = \Tr ( A B^T) = \sum_{i=1}^n \sum_{j=1}^n a_{i, j} b_{i, j} . $$
Let $\cD_n$ denote the set of all matrices $A = [a_{i, j}]_{i, j =1}^n$  whose entries are in the interval $[0, 1]$.

In 1976, Sloane and Harwit \cite{SH} posed the following conjecture. See also \cite[p.59]{HS} or 
\cite[Conjecture 11]{Zh}.

\begin{conjecture}
If $A \in \cD_n$ is a nonsingular matrix, then 
$$  \| A^{-1}\|_F \ge \frac{2 n}{n+1} \ ,  $$
where the equality holds if and only if  $A$ is an S-matrix.
\end{conjecture}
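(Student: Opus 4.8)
\emph{Proposed approach.} The plan is to rephrase $\|A^{-1}\|_F$ geometrically. Let $A_{\cdot 1},\dots,A_{\cdot n}$ be the columns of $A$. Since $A^{-1}A=I$, the $i$-th row of $A^{-1}$ is orthogonal to $A_{\cdot j}$ for all $j\neq i$ and has inner product $1$ with $A_{\cdot i}$; as the orthogonal complement of $\mathrm{span}\{A_{\cdot j}:j\neq i\}$ is one-dimensional, that row has Euclidean length exactly $1/\delta_i$, where $\delta_i$ denotes the distance from $A_{\cdot i}$ to this span. Hence
\[
\|A^{-1}\|_F^2=\sum_{i=1}^n\frac{1}{\delta_i^2}\ \ge\ \frac{n^2}{\sum_{i=1}^n\delta_i^2},
\]
the inequality being Cauchy--Schwarz (with equality iff all $\delta_i$ coincide). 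So it suffices to prove $\sum_{i=1}^n\delta_i^2\le (n+1)^2/4$.

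To do this I would replace the (unknown) point of $\mathrm{span}\{A_{\cdot j}:j\neq i\}$ closest to $A_{\cdot i}$ by the explicit vector $\frac1n\sum_{j\neq i}A_{\cdot j}$, so that $\delta_i^2\le\bigl\|\tfrac{n+1}{n}A_{\cdot i}-\tfrac1n A\mathbf 1\bigr\|^2$, where $\mathbf 1=(1,\dots,1)^T$. Summing over $i$ and interchanging the summations turns $\sum_i\delta_i^2$ into $\frac1{n^2}\sum_{k=1}^n\bigl((n+1)^2\beta_k-(n+2)\rho_k^2\bigr)$, where $\rho_k=\sum_j a_{kj}$ is the $k$-th row sum and $\beta_k=\sum_j a_{kj}^2$. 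Since $0\le a_{kj}\le 1$ forces $\beta_k\le\rho_k$, each term is at most $g(\rho_k):=(n+1)^2\rho_k-(n+2)\rho_k^2$. The catch is that $g$ is concave with maximum on $[0,n]$ attained at a non-integer, so this crude step alone overshoots $(n+1)^2/4$ by a factor $1+(n+1)^{-2}$; but $\beta_k=\rho_k$ is sharp only when row $k$ is a $\{0,1\}$ vector, i.e.\ $\rho_k\in\bZ$. Maximizing $g$ over the integers in $[0,n]$, the best value is $g\!\bigl(\tfrac{n+1}{2}\bigr)=\tfrac{n(n+1)^2}{4}$ when $n$ is odd, which gives $\sum_i\delta_i^2\le\tfrac{(n+1)^2}{4}$ and hence $\|A^{-1}\|_F^2\ge\tfrac{4n^2}{(n+1)^2}$; when $n$ is even the best integer is $\tfrac n2$ and one only obtains $\|A^{-1}\|_F^2\ge\tfrac{4n^2}{(n+1)^2+1}$, Cheng's slightly weaker estimate. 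I expect this integrality step to be the crux: it is both the source of the sharp constant and the reason the bound depends on the parity of $n$.

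For the equality statement (odd $n$) I would trace back the estimates: equality forces all $\delta_i$ equal, $\frac1n\sum_{j\neq i}A_{\cdot j}$ to be the orthogonal projection of $A_{\cdot i}$ for each $i$, $A$ to be a $\{0,1\}$-matrix, and every row sum to equal $\tfrac{n+1}{2}$. The projection conditions read $(n+1)\langle A_{\cdot i},A_{\cdot l}\rangle=\langle A\mathbf 1,A_{\cdot l}\rangle$ for $i\neq l$, which together with the symmetry of $A^TA$ forces all off-diagonal entries of $A^TA$ to be equal; combining this with "all $\delta_i$ equal" yields $A^TA=\tfrac{n+1}{4}(I+J)$ (with $J=\mathbf 1\mathbf 1^T$) and all column sums equal to $\tfrac{n+1}{2}$. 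Since then $A^T\mathbf 1=\tfrac{n+1}{2}\mathbf 1$ makes $\mathbf 1$ an eigenvector of $AA^T$ for its largest eigenvalue, one also gets $AA^T=\tfrac{n+1}{4}(I+J)$. Finally, the $(n+1)\times(n+1)$ matrix $H$ obtained from $J-2A$ by prepending a row and a column of ones satisfies $HH^T=(n+1)I$, i.e.\ $H$ is a Hadamard matrix, so $A$ is an S-matrix. Conversely, every S-matrix $S$ attains equality, since $S^{-1}=\tfrac{2}{n+1}(2S-J)^T$, which one checks directly from $SS^T=\tfrac{n+1}{4}(I+J)$.
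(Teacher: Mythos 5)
Your proposal is correct in substance and reaches the sharp odd-dimensional bound by a genuinely different outer route than the paper, although the two arguments meet at exactly the same core inequality. You read off the rows of $A^{-1}$ as reciprocal distances $1/\delta_i$ from each column to the span of the others, apply Cauchy--Schwarz to $\sum_i \delta_i^{-2}$, and then bound each $\delta_i$ by the explicit point $\frac1n\sum_{j\neq i}A_{\cdot j}$ of the span; the paper instead borders $kA^{-1}$ and $(2I-\tfrac1k J)A^T$ into $(n+1)\times(n+1)$ matrices $M,N$ with $\Tr(MN^T)=(n+1)^2$ and applies Cauchy--Schwarz to $\langle M,N\rangle$. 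These framings look quite different, but with $n=2k-1$ your quantity $\sum_i\bigl\|\tfrac{n+1}{n}A_{\cdot i}-\tfrac1n Ae\bigr\|^2$ equals $\tfrac{k^2}{n^2}\|(2I-\tfrac1k J)A^T\|_F^2$, so the bound $\sum_i\delta_i^2\le\tfrac{(n+1)^2}{4}$ you need is literally the paper's inequality $\|(2I-\tfrac1k J)A^T\|_F\le n$, and both proofs reduce to maximizing the same quadratic $\sum_k\bigl((n+1)^2\beta_k-(n+2)\rho_k^2\bigr)$ over the cube. Your route has the merit of motivating where that auxiliary matrix comes from, and your equality analysis (constant off-diagonal of $A^TA$, then $A^TA=AA^T=\tfrac{n+1}{4}(I+J)$, then the bordered Hadamard matrix) is correct and transparent; the paper's bordered matrices make the Hadamard structure fall out automatically from $NN^T=(n+1)I$.

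The one step you assert rather than prove is precisely the one you flag as the crux: passing from ``each term is at most $g(\rho_k)$, and $\beta_k\le\rho_k$ is sharp only for $\{0,1\}$ rows'' to ``it suffices to maximize $g$ over integers.'' As written this is not yet an argument: for non-integer $\rho_k$ the term $(n+1)^2\beta_k-(n+2)\rho_k^2$ is strictly below $g(\rho_k)$, but nothing in your chain rules out its still exceeding $\max_{p\in\bZ}g(p)$. The standard fix --- and exactly what the paper does --- is to note that $x\mapsto(n+1)^2\sum_j x_j^2-(n+2)\bigl(\sum_j x_j\bigr)^2$ has constant positive second partial derivative $2\bigl((n+1)^2-(n+2)\bigr)>0$ in each coordinate, hence is coordinatewise convex and attains its maximum over $[0,1]^n$ at a vertex, i.e.\ at a $\{0,1\}$ row; only then is the reduction to integer $\rho_k$ (and, in the equality case, the conclusion that $A$ is a $\{0,1\}$-matrix with all row sums $\tfrac{n+1}{2}$) legitimate. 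With that lemma inserted your proof of the odd case, including the equality characterization, is complete. One small correction: for even $n$ your method gives $\|A^{-1}\|_F^2\ge 4n^2/\bigl((n+1)^2+1\bigr)$, which is weaker than Cheng's and the paper's even-dimensional bound $4(n^2-2n+2)/n^2$ (the paper gets the better constant by reweighting the bordered matrices); neither argument settles the conjecture itself for even $n\ge 4$.
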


This conjecture arose from a problem in spectroscopy.  
A detailed discussion of its applications in spectroscopy can be found in \cite{HS}. 
The conjecture has been proved in recent papers \cite{ZJH}, \cite{Zou} and  \cite{Hu} for some special matrices.
Apparently, the authors of these papers were not aware of the fact that for odd dimensions the conjecture has already been proved in \cite{Ch}, while for even dimensions a slightly weaker lower bound for the norm has been derived; see \cite[Corollary 3.4]{Ch}.  
The proof is based on the celebrated equivalence theorem due to Kiefer and Wolfowitz 
\cite{KW} that connects the problem with the approximate theory of optimal design.  For an extensive treatment of this theory we refer to \cite{Ki}.

In this note we give a short and transparent proof of the conjecture when $n$ is odd, while for even $n$ our method gives the same (weaker) lower bound as in \cite[Corollary 3.4]{Ch}. 
% So, the conjecture is still open for even dimensions. 

\begin{theorem}
Let $A \in \cD_n$ be a nonsingular matrix.

If $n \ge 3$ is an odd integer, then 
\begin{equation}
\label{SlHa}
\| A^{-1}\|_F \ge \frac{2 n}{n+1} \ , 
\end{equation}
where the equality holds if and only if  $A$ is an S-matrix.

If $n \ge 4$ is an even integer, then 
\begin{equation}
\label{even}
\| A^{-1}\|_F  > \frac{2 \sqrt{n^2 - 2 n + 2}}{n} \ , 
\end{equation}

If $n=2$ then 
\begin{equation}
\label{two}
\| A^{-1}\|_F \ge \sqrt{2} \ , 
\end{equation}
where the equality holds if and only if $A$ is either the identity matrix or 
$\left[ \begin{matrix} 
0 & 1 \cr
1 & 0  
\end{matrix}
\right]$.

\end{theorem}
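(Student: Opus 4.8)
The plan is to work with $B = A^{-1}$ and exploit the structure of the constraint $A \in \mathcal{D}_n$ dualized through the inner product. The key observation is that $\langle A, E_{ij}\rangle = a_{ij} \in [0,1]$, where $E_{ij}$ are the matrix units, while $\langle B, A^T \rangle$... actually the cleaner route: note that $\langle A, B^T\rangle = \operatorname{tr}(AB) = \operatorname{tr}(I) = n$. More usefully, for any matrix $C$, if $A$ has all entries in $[0,1]$ and $B = A^{-1}$, then by Cauchy–Schwarz applied cleverly we want a lower bound on $\|B\|_F$. I would let $C = BB^T A$ or similar and compute $\langle A, C\rangle$ versus an entrywise bound. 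Concretely: since $0 \le a_{ij} \le 1$, we have $a_{ij}^2 \le a_{ij}$, hence $\langle A, A\rangle \le \langle A, J\rangle$ where $J$ is the all-ones matrix, and more generally for any matrix $M$ with nonnegative entries, $\sum_{ij} a_{ij} m_{ij} \ge \sum_{ij} a_{ij}^2 m_{ij}/\max$... I would instead pursue the identity route below.

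First I would establish the master inequality: for $B = A^{-1}$,
\begin{equation}
\label{master}
n = \operatorname{tr}(BA) = \langle B^T, A\rangle \le \langle B^T, J \rangle_{+} + (\text{correction}),
\end{equation}
but since entries of $B$ can be negative this needs care. The correct approach, I believe, is: write $\operatorname{tr}(B) = \operatorname{tr}(BAA^{-1})$... no. Here is the clean idea. Consider $\operatorname{tr}(B) + \operatorname{tr}(BA^T) - \operatorname{tr}(B J^T)$ type combinations; since $a_{ij}\in[0,1]$ means $a_{ij}(1-a_{ij})\ge 0$, i.e. $A \circ (J - A)$ has nonnegative entries (Hadamard product). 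The plan is to find a positive semidefinite matrix $P$ (depending on $B$) such that $\langle P, A\circ(J-A)\rangle \ge 0$ gives, after expanding, a quadratic inequality in the quantities $\operatorname{tr}(B)$, $\|B\|_F^2$, and $\langle B, B\circ \text{something}\rangle$. I expect the natural choice is $P = $ the rank-one-ish matrix built from $B$'s row sums, exploiting that $S$-matrices have constant row and column sums. Then I would let $s_i = \sum_j b_{ij}$ be the row sums of $B$; since $\sum_j a_{ij} s_{?}$... I would use $\sum_i s_i = \mathbf{1}^T B \mathbf{1}$ and relate $A^{-1}\mathbf{1}$ to the structure.

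The concrete steps: (1) Set $v = B\mathbf{1} = A^{-1}\mathbf{1}$, so $Av = \mathbf{1}$, meaning each row of $A$ dotted with $v$ equals $1$; since entries of $A$ are in $[0,1]$, this forces sign/magnitude constraints on $v$. (2) Decompose $\|B\|_F^2 = \|B - \frac{1}{n} v\mathbf{1}^T\|_F^2 + \frac{1}{n}\|v\|^2$ (using that the two pieces are orthogonal in Frobenius inner product, since $\langle B - \frac1n v\mathbf 1^T, v\mathbf 1^T\rangle = \langle B\mathbf 1, v\rangle - \frac1n\|v\|^2 n = \|v\|^2 - \|v\|^2 = 0$). (3) Bound $\|v\|^2$ from below: from $Av = \mathbf{1}$ with $A\in\mathcal D_n$, each $\mathbf{1} = $ row$_i \cdot v \le \sum_{j: v_j > 0} v_j =: \sigma$, so $\sigma \ge 1$; combined with a Cauchy–Schwarz and a parity argument on the number of positive entries, one gets $\|v\|^2 \ge \frac{4n}{(n+1)^2}\cdot n$ or the even-case analogue. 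Actually for the equality case, $v = \frac{2}{n+1}\mathbf 1$ for an $S$-matrix, giving $\|v\|^2 = \frac{4n}{(n+1)^2}$, and then the first piece must vanish, forcing $B = \frac1n v\mathbf 1^T$, which is rank one — contradiction unless... so this decomposition alone is not tight; I'd need to also use column sums symmetrically, i.e. combine the row-sum bound with the analogous $\|B\|_F^2 \ge \frac1n\|w\|^2$ for $w = B^T\mathbf 1$, or more likely run the whole argument on a cleverly chosen rank-one perturbation that captures all of $B$.

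**The main obstacle**, I anticipate, is getting the parity argument right to produce exactly $\frac{2n}{n+1}$ in the odd case while only $\frac{2\sqrt{n^2-2n+2}}{n}$ in the even case, and then separately pinning down the equality case as precisely the $S$-matrices (in odd dimensions $n$, $S$-matrices of order $n$ exist iff $n+1 \equiv 0 \pmod 4$, so the equality statement is vacuous unless such Hadamard matrices exist — this subtlety must be handled). The even/odd dichotomy strongly suggests that at some point we bound an integer $k$ (the number of positive coordinates of $v$, or a related count) and use that $k(n+1-k)$ or $\frac{n^2}{4}$ versus $\frac{n^2-1}{4}$ differs according to parity; extracting the strict inequality for even $n$ and verifying the two exceptional $2\times 2$ matrices by direct computation will be the finishing touches. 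I would handle $n=2$ entirely by hand, parametrizing $A = \begin{bmatrix} a & b \\ c & d\end{bmatrix}$ with $\|A^{-1}\|_F^2 = (a^2+b^2+c^2+d^2)/(ad-bc)^2$ and minimizing over $[0,1]^4$.
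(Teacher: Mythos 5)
Your proposal does not close, and the gap is structural rather than a matter of missing details. The one step you carry out completely --- the orthogonal decomposition $\|B\|_F^2 = \|B - \tfrac1n v\mathbf{1}^T\|_F^2 + \tfrac1n\|v\|^2$ with $v = A^{-1}\mathbf{1}$ --- cannot yield the theorem no matter how sharply you bound $\|v\|$: for an $S$-matrix one has $A\mathbf{1}=\tfrac{n+1}{2}\mathbf{1}$, hence $v = \tfrac{2}{n+1}\mathbf{1}$ and $\tfrac1n\|v\|^2 = \tfrac{4}{(n+1)^2}$, whereas $\|A^{-1}\|_F^2 = \tfrac{4n^2}{(n+1)^2}$; the rank-one piece captures only a $1/n^2$ fraction of the norm you need. (Moreover, the only bound your row argument actually delivers on the positive part of $v$ is $\sigma\ge 1$, which after Cauchy--Schwarz gives $\|v\|^2\ge 1/n$ and hence only $\|A^{-1}\|_F^2\ge 1/n^2$.) You notice the slack yourself and gesture at repairs (adding the column-sum analogue, pairing a positive semidefinite $P$ against $A\circ(J-A)$, a ``rank-one perturbation that captures all of $B$''), but none of these is carried out, and the symmetric column bound only doubles a quantity that is too small by a factor of order $n^2$. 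The parity mechanism you predict (counting positive coordinates of $v$) is also not where the odd/even dichotomy actually arises. The only complete and correct portion of the proposal is the $n=2$ case, which coincides with the paper's direct computation.

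For comparison, the paper applies Cauchy--Schwarz not to $B$ against a rank-one matrix but to two bordered $(n+1)\times(n+1)$ matrices $M = \left[\begin{smallmatrix} 1 & e^T \\ e & -kA^{-1}\end{smallmatrix}\right]$ and $N = \left[\begin{smallmatrix} 1 & e^T \\ e & -(2I - \tfrac1k J)A^T\end{smallmatrix}\right]$ (for $n=2k-1$), whose inner product $\Tr(MN^T) = (n+1)^2$ is computed exactly from $A^{-1}A = I$. The entire weight of the argument then falls on the companion bound $\|(2I-\tfrac1k J)A^T\|_F\le n$, proved by noting that the relevant quadratic functional is strictly convex in each entry of $A$, hence maximized at a vertex of $[0,1]^{n\times n}$, and then optimizing the integer row sums $p_i$ by completing the square --- this integer step is where the parity of $n$ genuinely enters, and the same scheme with adjusted constants gives the even case together with the integrality contradiction that rules out equality. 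If you want to salvage your approach, the lesson is that the test object paired against $A^{-1}$ must be full rank (essentially $2A^T-J$ up to scaling and bordering), not the rank-one matrix $v\mathbf{1}^T$.
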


\begin{proof} 
Let $e = (1, 1, 1,  \ldots, 1)^T \in \bR^n$ and $J = e e^T$ . We divide the proof into three cases.

CASE 1: $n \ge 3$ is an odd integer, so that $n = 2 k - 1$  for some  $k \in \bN$. \\
Define the matrices  $M$ and $N$ of order $n+1$ by 
$$  M = \left[
\begin{matrix} 
1 & e^T \cr
e & - k A^{-1} 
\end{matrix}
\right] 
\ \ \ \textrm{and} \ \ \
N = \left[
\begin{matrix} 
1 & e^T \cr
e & - (2I-\tfrac{1}{k} J) A^T 
\end{matrix} 
\right] . $$
Since 
$$  M N^T= \left[
\begin{matrix} 
n+1 &  *  \cr
*    &  (n+1) I 
\end{matrix}
\right]  , $$
we have 
$$
\langle M, N \rangle = \Tr ( M N^T) = (n+1)^2 . 
$$
By the Cauchy-Schwarz inequality, we then obtain 
\begin{equation}
\label{CS}
 (n+1)^4 =  (\langle M, N \rangle)^2 \le \| M \|_F^2 \cdot \| N \|_F^2 =  
\end{equation}
$$ = (1 + 2 n + k^2 \| A^{-1} \|_F^2) (1 + 2 n +   \| (2I-\tfrac{1}{k} J) A^T  \|_F^2)  \  .  $$
If we show that 
\begin{equation}
\label{bound}
\| (2I-\tfrac{1}{k} J) A^T  \|_F \le n \ , 
\end{equation}
then (\ref{CS}) gives the inequality
$$   (n+1)^2 \le 1 + 2 n + k^2 \| A^{-1} \|_F^2 , $$
and so 
$$  \| A^{-1} \|_F^2 \ge \frac{n^2}{k^2} = \left( \frac{2 n}{n+1} \right)^2 $$
completing the proof of (\ref{SlHa}).

To show  (\ref{bound}), we determine the maximum of the function $f$ defined on $\cD_n$ by
$$ f(A) = \| (2I-\tfrac{1}{k} J) A^T  \|_F^2 =  \Tr (A (2I-\tfrac{1}{k} J)^2 A^T) = \Tr \left( 4  A  A^T - \frac{2k +1}{k^2} (Ae)(Ae)^T \right) = $$
$$ = 4 \, \Tr(A  A^T) - \frac{2k +1}{k^2} (Ae)^T (Ae) = 
4 \sum_{i=1}^n \sum_{j=1}^n a_{i, j}^2  - \frac{2k +1}{k^2} \sum_{i=1}^n \left( \sum_{j=1}^n a_{i, j} \right)^2 , $$
where $A = [a_{i, j}]_{i, j =1}^n \in \cD_n$. 
Since $f$ is a continuous function on a compact set, it attains its maximum at some matrix $B = [b_{i, j}]_{i, j =1}^n \in \cD_n$.
Assume that $0 < b_{i, j} < 1$ for some $i, j \in \{1, 2, \ldots, n\}$. Then we have 
$$ \frac{\partial f}{\partial a_{i, j}} (B) = 0 \ \ \ \ \textrm{and} \ \ \ \ \   
   \frac{\partial^2 f}{\partial a_{i, j}^2} (B) \le 0 \ . $$
However, 
$$ \frac{\partial f}{\partial a_{i, j}} (A) = 8 a_{i, j} -  \frac{2k +1}{k^2} \,  2 \left( \sum_{l=1}^n a_{i, l} \right) , $$
and so 
$$ \frac{\partial^2 f}{\partial a_{i, j}^2} (A) = 8 -  \frac{2 (2k +1)}{k^2} = \frac{2 (4 k^2 - 2k - 1)}{k^2} > 0 $$
for all $A = [a_{i, j}]_{i, j =1}^n \in \cD_n$. Therefore, we conclude that $B$ is necessarily a $\{0, 1\}$-matrix.
Let $p_i$ be the number of ones in the $i$-th row of $B$. Then 
$$ f(B) = 4  \sum_{i=1}^n p_i -  \frac{2k +1}{k^2} \sum_{i=1}^n p_i^2 = 
- \frac{2k +1}{k^2} \sum_{i=1}^n \left(p_i -  \frac{2k^2}{2 k+1} \right)^2 + \frac{4 k^2}{2 k+1} (2 k - 1)  . $$
Since $k - \frac{2k^2}{2 k+1} = \frac{k}{2 k+1} \in (0, \frac{1}{2})$, we have $| m - \frac{2k^2}{2 k+1} | > | k - \frac{2k^2}{2 k+1} |$
for all $m \in \{1, 2, \ldots, n\} \setminus \{k\}$, implying that $p_i = k$ for all $i$.  It follows that
$$  f(B) = 4 n k - \frac{2k +1}{k^2} n k ^2 = n (4 k - 2 k - 1) =  n^2 . $$
This completes the proof of  the inequality (\ref{bound}).

Assume that the equality holds in  (\ref{SlHa}). Then there are equalities in (\ref{CS}) and (\ref{bound}), that is, $M = N$ and $A$ is an invertible  $\{0, 1\}$-matrix with $A e = k e$. 
It follows that $k A^{-1} = (2I-\frac{1}{k} J) A^T =  2 A^T -  J$, and so 
$e =  k A^{-1} e = 2 A^T e - (2 k - 1) e $  implying that  $A^T e = k e$. 
Therefore, we have 
$$  N N^T = M N^T = \left[
\begin{matrix} 
1 & e^T \cr
e & - k A^{-1} 
\end{matrix}
\right]  
 \left[ 
\begin{matrix} 
1 & e^T \cr
e &  J - 2 A 
\end{matrix} 
\right] = 
\left[
\begin{matrix} 
n+1 &  0  \cr
0   &  (n+1) I 
\end{matrix}
\right]  .$$
This means that 
$$ N^T = \left[ 
\begin{matrix} 
1 & e^T \cr
e &  J - 2 A 
\end{matrix} 
\right] $$
is a Hadamard matrix, and so $A$ is an $S$-matrix.
As the equality holds in  (\ref{SlHa}) when  $A$ is an $S$-matrix, the proof is complete for odd dimensions. 

CASE 2: $n \ge 4$ is an even integer, so that $n = 2k$ for some integer $k \geq 2$. \\
Define the matrices  $M$ and $N$ of order $n+1$ by 
$$  M = \left[
\begin{matrix} 
0 & e^T \cr
e & \tfrac{k \sqrt{k}}{\sqrt{k-1}} \, A^{-1} 
\end{matrix}
\right] 
\ \ \ \textrm{and} \ \ \
N = \left[
\begin{matrix} 
0 & e^T \cr
e &  \tfrac{\sqrt{k-1}}{k \sqrt{k}}  \left( \! \tfrac{k(2k-1)}{k-1} \, I - J \right) A^T   
\end{matrix} 
\right] . $$
Then 
$$  M N^T= \left[
\begin{matrix} 
2 k  &  *  \cr
*    &   \tfrac{k (2k-1)}{k-1} \, I 
\end{matrix}
\right] , $$
and so 
$$
\langle M, N \rangle = \Tr ( M N^T) = 2 k +  \frac{2 k^2 (2k-1)}{k-1} =  \frac{2 k (2k^2-1)}{k-1} . $$
By the Cauchy-Schwarz inequality, we then obtain 
\begin{equation}
\label{CSeven}
 \left( \frac{2 k (2k^2-1)}{k-1} \right)^2 = (\langle M, N \rangle)^2 \le \| M \|_F^2 \cdot \| N \|_F^2 =  
\end{equation}
$$ = (4 k + \tfrac{k^3}{k-1} \| A^{-1} \|_F^2) (4 k + g(A)) \ , $$
where $g(A)$ is defined by 
$$ g(A) = \left\| \tfrac{\sqrt{k-1}}{k \sqrt{k}}  \left( \! \tfrac{k(2k-1)}{k-1} \, I - J \right) A^T  \right\|_F^2 . $$
If  $A = [a_{i, j}]_{i, j =1}^n \in \cD_n$ then 
$$ g(A) = \Tr \left( A \left( \! \tfrac{(2k-1)^2}{k(k-1)} \, I - \tfrac{2}{k} \, J \right) A^T \right) =
 \frac{(2k-1)^2}{k(k-1)} \, \Tr (A A^T) - \frac{2}{k} (A e)^T \, (A e) = $$
$$ = \frac{(2k-1)^2}{k(k-1)} \sum_{i=1}^{2 k} \sum_{j=1}^{2 k} a_{i, j}^2  - \frac{2}{k} \sum_{i=1}^{2 k} \left( \sum_{j=1}^{2 k} a_{i, j} \right)^2 . $$
Since 
$$ \frac{\partial^2 g}{\partial a_{i, j}^2} (A) = \frac{2 (2k-1)^2}{k(k-1)} - \frac{4}{k} = \frac{2 (4 k^2 - 6 k + 3)}{k (k-1)} > 0 , $$
we conclude (similarly as in Case 1) that the maximum of the function $g$ on $\cD_n$ 
is attained at some matrix $B = [b_{i, j}]_{i, j =1}^n \in \cD_n$ with 
$b_{i, j} \in \{0, 1\}$ for all $i$ and $j$.   
Let $p_i$ be the number of ones in the $i$-th row of $B$. Then 
$$ g(B) = \frac{(2k-1)^2}{k(k-1)} \sum_{i=1}^{2 k} p_i -  \frac{2}{k} \sum_{i=1}^{2 k} p_i^2 = 
- \frac{2}{k} \sum_{i=1}^{2 k} \left(p_i -  \frac{(2k-1)^2}{4(k-1)} \right)^2 + \frac{(2k-1)^4}{4(k-1)^2} . $$
Since $k - \frac{(2k-1)^2}{4(k-1)} = - \frac{1}{4(k-1)} \in (-\frac{1}{2}, 0)$, 
we obtain that $p_i = k$ for all $i$.  It follows that
$$  g(B) = \frac{(2k-1)^2}{k-1} \, 2 k  - 4 k^2 =  \frac{2 k (2k^2-2k+1)}{k-1} . $$
Now, since $4 k + g(B) = \tfrac{2 k (2 k^2-1)}{k-1}$, 
the inequality (\ref{CSeven}) gives 
$$ 4 k + \tfrac{k^3}{k-1} \| A^{-1} \|_F^2 \ge \frac{2 k (2 k^2-1)}{k-1} , $$
and so 
\begin{equation}
\label{evenA}
\| A^{-1} \|_F^2 \ge \frac{2 (2k^2-2k+1)}{k^2} = \frac{4(n^2-2n+2)}{n^2} .
\end{equation}
To complete the proof of the inequality (\ref{even}), we must exclude the possibility of the equality in 
(\ref{evenA}). So, assume that for some matrix $A \in \cD_n$ the equality holds in (\ref{evenA}). Then $A$ is a $\{0, 1\}$-matrix and $M = N$. Therefore, we have 
$$ \frac{k^3}{k-1} \, A^{-1}  = \left( \! \frac{k(2k-1)}{k-1} \, I - J \right) A^T  $$
or 
$$ \frac{k^2}{2 k-1} I = \left( \! I - \frac{k-1}{k(2k-1)} \, J \right) A^T  A  , $$
implying that 
$$ A^T  A  = \frac{k^2}{2 k-1} \left( \! I - \frac{k-1}{k(2k-1)} \, J \right)^{-1} = 
\frac{k^2}{2 k-1} \left( \! I + \frac{k-1}{k} \, J \right) . $$
It follows that the off-diagonal entries of the matrix $A^T  A$ are equal to the number $\frac{k(k-1)}{2k-1}$
that is not an integer. This is a contradiction with the fact that $A$ is a $\{0, 1\}$-matrix.
 
CASE 3: $n =2$. 
If 
$$ A = \left[ \begin{matrix} 
a & b \cr
c & d  
\end{matrix}
\right] $$
is an invertible matrix in $\cD_2$, then 
$$ A^{-1}= \frac{1}{a d - bc} \, \left[ \begin{matrix} 
d & -b \cr
-c & a  
\end{matrix}
\right] , $$
and so 
$$ \| A^{-1}\|_F^2 =  \frac{a^2 + b^2 + c^2 + d^2}{(a d - b c)^2} . $$
Now, we have 
$$ (a d - b c)^2 \left( \| A^{-1}\|_F^2 - 2 \right) = (a-d)^2 + (b-c)^2 + 2 a d (1 - a d) + 2 b c (1 - b c) + 4 a b c d  \ge 0 . $$
We conclude that  
$$ \| A^{-1}\|_F^2 \ge  2 $$
and the equality holds if and only if $a=d$, $b=c$, $a d \in \{0, 1\}$, $b c  \in \{0, 1\}$ and $a b c d = 0$.
This implies the desired conclusions.
\end{proof}

\vspace{3mm}
{\bf
\begin{center}
 Acknowledgment.
\end{center}
} 
The author was supported in part by the Slovenian Research Agency.

\vspace{2mm}

\noindent
Roman Drnov\v sek \\
Department of Mathematics \\
Faculty of Mathematics and Physics \\
University of Ljubljana \\
Jadranska 19 \\
SI-1000 Ljubljana \\
Slovenia \\
e-mail : roman.drnovsek@fmf.uni-lj.si 

\end{document}